\definecolor{darkgreen}{rgb}{0.0, 0.5, 0.0}
\newcommand{\loc}{\mathrm{loc}}
\newcommand{\R}{\mathbb R}
\newcommand{\E}{\mathbb E}
\newcommand{\B}{\mathcal{B}}
\renewcommand{\P}{\mathbb P}
\newcommand{\occ}{\mathrm{occ}}
\newcommand{\ff}{\mathbf{f}}
\newcommand{\q}{\mathbf{q}}
\newcommand{\tq}{\mathbf{\widetilde{q}}}
\newcommand{\QQ}{\mathbf{Q}}
\newcommand{\oQQ}{\mathbf{Q'}}
\newcommand{\hQ}{\mathcal{Q}_h}
\newcommand{\MM}{\mathbb{M}}
\newcommand{\oMM}{\mathbb{M}'}
\renewcommand{\v}{\mathbf{v}}
\newcommand{\ov}{\mathbf{v'}}
\newcommand{\op}{p'}
\newcommand{\om}{m'}
\newcommand{\oM}{M'}
\renewcommand{\oe}{e'}
\newcommand{\tf}{\widehat{f}}
\newcommand{\tg}{\widehat{g}}
\newcommand{\diam}{\mathrm{diam}}
\theoremstyle{definition}
\newtheorem{thm}{Theorem}
\newtheorem{defn}{Definition}
\newtheorem{prop}[defn]{Proposition}
\newtheorem{lem}[defn]{Lemma}
\tikzstyle{every node}=[circle, draw, fill=black!50, inner sep=0pt, minimum width=4pt]
\tikzstyle{white}=[circle, draw, fill=black!0, inner sep=0pt, minimum width=4pt]
\tikzstyle{bigwhite}=[circle, draw, fill=black!0, inner sep=0pt, minimum width=10pt]
\tikzstyle{dual}=[circle, draw=blue, fill=black!0, inner sep=0pt, minimum width=4pt]
\tikzstyle{fat}=[circle, draw, fill=red!50, inner sep=0pt, minimum width=8pt]
\tikzstyle{fat_bis}=[circle, draw, fill=blue!50, inner sep=0pt, minimum width=8pt]
\tikzstyle{fat_ter}=[circle, draw, fill=green!50, inner sep=0pt, minimum width=8pt]
\tikzstyle{rouge}=[circle, draw, fill=red, inner sep=0pt, minimum width=7pt]
\tikzstyle{bleu}=[circle, draw, fill=blue, inner sep=0pt, minimum width=7pt]
\tikzstyle{petitrouge}=[circle, draw, fill=red, inner sep=0pt, minimum width=4pt]
\tikzstyle{petitbleu}=[circle, draw, fill=blue, inner sep=0pt, minimum width=4pt]
\tikzstyle{texte}=[draw=none, fill=none]
\title{\bf{Local limits of random bipartite maps in high genus: the general case}}
\author{Thomas \bsc{Budzinski}\footnote{CNRS and ENS de Lyon, \url{thomas.budzinski@ens-lyon.fr}}}
\begin{document}

\maketitle

\begin{abstract}
	We prove the local convergence of uniform bipartite maps with prescribed face degrees in the high genus regime. Unlike in the previous work~\cite{BL22} on the subject, we do not make any assumption on the tail of the face degrees, except that they remain finite in the limit.
\end{abstract}

\section{Introduction}

Infinite local limits of random planar maps have been the object of extensive literature since the introduction of the UIPT by Angel and Schramm~\cite{AS03}. In particular, local limits of Boltzmann bipartite planar maps are now very well understood~\cite{Bud15, BC16} (we also refer to~\cite{C-StFlour} for a complete survey and to~\cite{St18} for an investigation of the non-bipartite case). On the other hand, the study of random maps whose genus goes to infinity at the same time as their size is much more recent. It was proved in~\cite{BL19} that uniform high genus triangulations converge locally to a model of random infinite hyperbolic triangulations introduced earlier by Curien~\cite{CurPSHIT}. This result was extended to a large class of bipartite maps in~\cite{BL22} (with the limiting objects defined in~\cite[Appendix C]{B18these}) under the assumption that the expected degree of a typical face remains finite in the limit. The goal of this note is to get rid of this assumption.

More precisely, let $\mathbf{f}=(f_j)_{j \geq 1}$ be a face degree sequence (i.e. a sequence of nonnegative integers such that $f_j=0$ for $j$ large enough), and let $0 \leq g \leq \frac{1}{2} \sum_{j \geq 1} (j-1)f_j$. We denote by $M_{\mathbf{f},g}$ a uniform random bipartite map of genus $g$ with exactly $f_j$ faces of degree $2j$ for all $j \geq 1$ (the bound on $g$ guarantees the existence of such maps). We also write $|\mathbf{f}|=\sum_{j \geq 1} jf_j$ for the number of edges of such a map. On the other hand, a family of models $\left( \MM_{\q} \right)$ of random infinite bipartite planar maps was introduced in~\cite[Appendix C]{B18these}. These models are indexed by a set $\hQ$ of weight sequences $\q=(q_j)_{j \geq 1}$ (i.e. sequences of nonnegative numbers), and the distribution of $\MM_{\q}$ is characterized by the following Boltzmann property. There are constants $C_p$ for $p \geq 1$ such that for every finite bipartite map $m$ with a hole of perimeter $2p$ (see Section~\ref{sec:prelim} for precise definitions), we have
\[ \P \left( m \subset \MM_{\q} \right) = C_p \times \prod_{f \in m} q_{\deg(f)/2}, \]
where the product is over all internal faces of $m$. It was proved in~\cite[Appendix C]{B18these} that if such a map exists, its distribution is determined by $\q$, so the notation $\MM_{\q}$ makes sense. We will prove the following local convergence result.

\begin{thm}\label{thm_main}
	Let $(\mathbf{f}^{n})_{n \geq 1}$ be a sequence of face degree sequences, and let $(g_n)_{n \geq 1}$ be a sequence such that $0 \leq g_n \leq \frac{1}{2} \sum_{j \geq 1} (j-1)f_j^n$ for all $n \geq 1$. We assume that $|\mathbf{f}^n| \to +\infty$ when $n \to +\infty$ and that $\frac{f^n_j}{|\mathbf{f}^n|} \to \alpha_j$ for all $j \geq 1$, where $\sum_{j \geq 1} j \alpha_j=1$.
	We also assume $\frac{g_n}{|\mathbf{f}^n|} \to \theta$, where $0 \leq \theta < \frac{1}{2} \sum_{j \geq 1} (j-1) \alpha_j$.
	
	Then we have the convergence in distribution
	\[ M_{\ff^n, g_n} \xrightarrow[n \to +\infty]{(d)} \MM_{\q}\]
	for the local topology, where the weight sequence $\q$ depends only on $\theta$ and $\left( \alpha_j \right)_{j \geq 1}$. Moreover, this induces a bijection between, on the one hand, the parameters $\left( (\alpha_j)_{j \geq 1}, \theta \right)$ satisfying the assumptions above and, on the other hand, the weight sequences $\q$ such that $\MM_{\q}$ exists.
\end{thm}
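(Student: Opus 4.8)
The plan is to translate local convergence into the asymptotics of ratios of map-counting numbers, and then to recognise the limit through its spatial Markov property. Write $\beta_{g}^{(2p)}(\ff)$ for the number of rooted bipartite maps of genus $g$ carrying one hole of perimeter $2p$ and exactly $f_j$ internal faces of degree $2j$, and $\beta_g(\ff)$ for the closed analogue. By definition of the local topology, it suffices to show that for every finite bipartite map $m$ with one hole of perimeter $2p$ and face vector $\mathbf{m}=(m_j)_{j\ge1}$, the inclusion probability converges. Cutting $m$ out of the ambient map leaves a map of the same genus $g_n$ with a hole of perimeter $2p$ and face vector $\ff^n-\mathbf{m}$, so that
\[ \P\!\left(m\subset M_{\ff^n,g_n}\right)=\frac{\beta_{g_n}^{(2p)}(\ff^n-\mathbf{m})}{\beta_{g_n}(\ff^n)}. \]
The Boltzmann form $C_p\prod_f q_{\deg(f)/2}=C_p\prod_j q_j^{\,m_j}$ that I want to reach is exactly the statement that this ratio factorises in the limit into a perimeter constant times one factor per removed face; note that the strict inequality $\theta<\tfrac12\sum_j(j-1)\alpha_j$ keeps the excised enumeration problem in the admissible regime after removing the finitely many faces of $m$.

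First I would isolate two elementary surgeries. Opening a hole of perimeter $2p$ in a closed map should contribute $C_p:=\lim_n \beta_{g_n}^{(2p)}(\ff^n)/\beta_{g_n}(\ff^n)$, and deleting one internal face of degree $2j$ should contribute $q_j:=\lim_n \beta_{g_n}^{(2p)}(\ff^n-e_j)/\beta_{g_n}^{(2p)}(\ff^n)$; the full factorisation then follows by composing these moves and checking that the limits are insensitive to the order of removal, to $p$, and to the bounded perturbation already performed. Proving that these limits exist and have these values is the analytic core, and here I would rely on precise asymptotic enumeration of bipartite maps by genus and face-degree profile, either through the labelled-mobile encoding or through a genus recursion of Goulden--Jackson / Carrell--Chapuy type, in the spirit of \cite{BL19,BL22}. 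The step I expect to be the genuine obstacle is that, with no tail hypothesis beyond $\sum_j j\alpha_j=1$, the face seen from the root may be heavy-tailed: its limiting degree law $\P(J=j)=j\alpha_j$ need not have a finite mean, so truncation-plus-first-moment controls are unavailable. The one-face ratios must therefore be shown to converge with enough uniformity in $j$ that the contribution of faces of degree $>D$ is $o(1)$ uniformly in $n$ as $D\to\infty$; I would obtain this from a uniform version of the enumeration asymptotics together with the sharp bookkeeping forced by $\sum_j j\alpha_j=1$, which is precisely what forbids edge-mass from escaping into large faces.

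Granting the ratio estimates, tightness for the local topology is then accessible: the root-face degree already has the proper limit law $j\alpha_j$, and the degrees of the neighbouring faces and vertices are controlled by the same ratios, summability in $j$ being guaranteed by $\sum_j j\alpha_j=1$. Every subsequential limit is thus a random infinite bipartite map whose inclusion probabilities are the limits computed above; hence it satisfies the Boltzmann property with the single sequence $\q=(q_j)_{j\ge1}$, and by the uniqueness statement of \cite[Appendix C]{B18these} it must equal $\MM_{\q}$, which upgrades the subsequential convergence to genuine convergence. That $\q$ depends on $\ff^n$ and $g_n$ only through $(\alpha_j)$ and $\theta$ is then automatic: the ratios defining $q_j$ and $C_p$ are governed by the leading (free-energy) term of the enumeration asymptotics, which depends on the data solely through the frequencies $\alpha_j=\lim_n f_j^n/|\ff^n|$ and the genus density $\theta=\lim_n g_n/|\ff^n|$.

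For the bijection I would read off intrinsic invariants of $\MM_{\q}$ and invert. The frequencies $(\alpha_j)$ are recovered as the asymptotic proportions of degree-$2j$ faces in $\MM_{\q}$ (equivalently, the root-face degree law $j\alpha_j$ is an invariant of the law), and $\theta$ is recovered from the vertex-to-edge density of $\MM_{\q}$, which Euler's formula ties to $\theta$ and $(\alpha_j)$. Surjectivity onto the admissible sequences $\hQ$ is then the content of the convergence theorem itself, since every $\q$ with $\MM_{\q}$ existing arises as such a limit; injectivity follows because $(\alpha_j)$ and $\theta$ are determined by the law of $\MM_{\q}$. The only delicate point is the invertibility of $\theta\mapsto\q$ at fixed $(\alpha_j)$, which I expect to reduce to the strict monotonicity of the vertex density of $\MM_{\q}$ in the single hyperbolicity parameter underlying the parametrisation of \cite[Appendix C]{B18these}.
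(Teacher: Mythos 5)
Your outline identifies the right target (convergence of the ratios $\beta_{g_n}(\ff^n-\mathbf{m})/\beta_{g_n}(\ff^n)$ and their factorisation into $C_p\prod_j q_j^{m_j}$), but the step you defer to ``precise asymptotic enumeration of bipartite maps by genus and face-degree profile'' is not a technical lemma you can import: in the high genus regime $g_n\sim\theta|\ff^n|$ no such asymptotics are known, and in~\cite{BL19,BL22} the logic runs in the opposite direction --- the local limit theorem is what \emph{produces} the asymptotic enumeration statements, not the other way around. What is actually available from~\cite{BL22} (and valid without any tail assumption) is only tightness plus the fact that every subsequential limit is $\MM_{\QQ}$ for a \emph{random} weight sequence $\QQ$; equivalently, the ratios you need are bounded along subsequences but not yet known to converge. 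The entire difficulty of the theorem is concentrated in showing that $\QQ$ is deterministic, and your proposal assumes this away at the point where you posit that the limits $q_j$ and $C_p$ exist and ``depend on the data solely through $(\alpha_j)$ and $\theta$.'' Your own flag about heavy-tailed root-face degrees is a symptom of this, but the gap is more fundamental than a missing uniformity estimate: there is no enumeration input to make uniform.

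The paper closes this gap by a two-point argument that never touches enumeration asymptotics. One roots $M_n$ at two independent uniform edges, extracts a joint subsequential limit $(M,M')$, shows the pair is Markovian and hence (by an extension of~\cite[Theorem 4]{BL22}) conditionally independent given its Boltzmann weights $(\QQ,\QQ')$, and then compares two exact counting identities: the probability of seeing the two-faces-of-degree-$2j$ pattern at one root equals $\beta_g(\ff^n-2\cdot\mathbf{1}_j)/\beta_g(\ff^n)$, and the probability of seeing one degree-$2j$ face at each of the two roots equals the same ratio up to $O(1/n)$. Passing to the limit gives $\E[Q_j^2]=\E[Q_jQ_j']$, whence $\E[(Q_j-Q_j')^2]=0$ and $\QQ=\QQ'$ a.s., which forces $\QQ$ to be deterministic and yields the convergence. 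Your treatment of the bijection (recovering $(\alpha_j)$ from the root-face degree law and $\theta$ from the vertex density via Euler's formula) does match the paper's argument, and is fine once the convergence is in hand; but as written the main body of your proof rests on an enumeration result that does not exist.
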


This was also the main result of~\cite{BL22} under the additional assumption that $\sum_{j \geq 1} j^2 \alpha_j<+\infty$, which is equivalent to saying that the limit distribution of the degree of the root face of $M_{\ff^n, g_n}$ has finite expectation. On the other hand, the conditions required here are the minimal ones needed to obtain a limit with finite faces and finite vertex degrees: if $\sum_{j \geq 1} j \alpha_j < 1$, then the face incident to the root edge has a positive probability to become infinite in the limit. If $\theta=\frac{1}{2}\sum_{j \geq 1} (j-1) \alpha_j$, then by the Euler formula the expected inverse degree of the root vertex goes to $0$, which means that the degree of the root goes to infinity in probability.

\paragraph{Quenched local convergence.}
As a byproduct of the proof, we obtain a quenched local limit result, which is a slight reinforcement of Theorem~\ref{thm_main}. If $m_0$ is a finite bipartite planar map with a hole (see Section~\ref{sec:prelim} for the precise definition) and $m$ is a finite bipartite map, we denote by $\occ_{m_0}(m)$ the number of occurences of $m_0$ in $m$. In other words, this is the number of oriented edges $e$ of $m$ such that there is a neighbourhood of $e$ in $m$ that is isomorphic to $m_0$, in such a way that $e$ is matched with the root edge of $m_0$.

\begin{thm}\label{thm:quenched}
	Let $(\ff^n)_{n \geq 1}$ and $(g_n)_{n \geq 1}$ satisfy the assumptions of Theorem~\ref{thm_main}. Then for any finite planar bipartite map $m_0$ with a hole, we have the convergence in probability
	\[ \frac{\occ_{m_0} \left( M_{\ff^n,g_n} \right)}{2|\ff^n|} \xrightarrow[n \to +\infty]{} \P \left( m_0 \subset \MM_{\q} \right),  \]
	where $\q$ depends only on $\theta$ and $(\alpha_j)_{j \geq 1}$ and is the same as in Theorem~\ref{thm_main}.
\end{thm}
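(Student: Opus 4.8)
The plan is to upgrade the annealed statement of Theorem~\ref{thm_main} to the quenched one by a second moment argument. Write $M_n := M_{\ff^n,g_n}$, set $X_n := \frac{\occ_{m_0}(M_n)}{2|\ff^n|}$ and $p := \P(m_0 \subset \MM_\q)$. Since $2|\ff^n|$ is the number of oriented edges of $M_n$ and $\occ_{m_0}(M_n)$ counts the oriented edges around which $M_n$ contains a copy of $m_0$, the variable $X_n$ is, conditionally on $M_n$, exactly the probability that $M_n$ rooted at a uniform oriented edge contains $m_0$. Taking expectations and recalling that the root of $M_n$ is a uniform oriented edge, $\E[X_n]$ is the probability that $M_n$ contains $m_0$ at its root, so Theorem~\ref{thm_main} gives $\E[X_n] \to p$. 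By Chebyshev's inequality it thus suffices to prove $\Var(X_n) \to 0$, or equivalently $\E[X_n^2] \to p^2$.

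To access the second moment, let $e,e'$ be two independent uniform oriented edges of $M_n$, sampled independently of everything else, and let $e \rightsquigarrow m_0$ denote the event that $M_n$ contains a copy of $m_0$ rooted at $e$. Expanding the square gives
\[ \E[X_n^2] = \P\big( e \rightsquigarrow m_0 \ \text{and}\ e' \rightsquigarrow m_0 \big). \]
The contribution of the diagonal $e=e'$ equals $\frac{1}{2|\ff^n|}\E[X_n] = O(|\ff^n|^{-1})$ and is negligible, so $e$ and $e'$ may be treated as distinct. Conditioning on the first root yields $\E[X_n^2] = \P(e \rightsquigarrow m_0)\,\P(e' \rightsquigarrow m_0 \mid e \rightsquigarrow m_0)$, in which the first factor already converges to $p$; everything then reduces to showing that the conditional probability also converges to $p$.

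The heart of the matter is a revealing argument resting on the spatial Markov property of uniform bipartite maps used in the proof of Theorem~\ref{thm_main}. On the event $e \rightsquigarrow m_0$, reveal the corresponding copy of $m_0$ and its hole, of some perimeter $2\ell$. Conditionally on this revealed piece, the map filling the hole is again uniform, now among bipartite maps with a boundary of perimeter $2\ell$, with face degree sequence $\ff^n$ minus the finitely many faces of $m_0$ and with the same genus $g_n$ (as $m_0$ is planar, the genus is unchanged). Because $|\ff^n| \to +\infty$, deleting the $O(1)$ faces of $m_0$ and fixing a bounded boundary leaves the limiting data untouched: the proportions still converge to the same $(\alpha_j)_{j \ge 1}$ and the genus ratio still converges to the same $\theta$, hence to the same weight sequence $\q$. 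Applying to the filling map the version of the local convergence with a boundary that is available from the proof of Theorem~\ref{thm_main}, and using that $e'$ falls inside the filling region and away from the bounded revealed part with probability tending to $1$, one gets $\P(e' \rightsquigarrow m_0 \mid e \rightsquigarrow m_0) \to p$. Combining the two factors gives $\E[X_n^2] \to p^2$ and closes the argument.

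I expect the main obstacle to be precisely the content of this last step: one needs the local limit of Theorem~\ref{thm_main} to persist in the presence of a boundary of bounded perimeter, together with its stability under bounded perturbations of the data $\big((\alpha_j)_{j\ge1}, \theta\big)$, so that the \emph{same} $\q$ appears after conditioning. This robustness should come out of the proof of Theorem~\ref{thm_main} itself — which already controls the relevant combinatorial ratios and one-point functions uniformly enough — which is exactly why the quenched statement is obtained as a byproduct. A secondary, more routine point is to check that a uniform $e'$ is, with probability tending to $1$, far enough from the glued boundary that its neighbourhood in $M_n$ agrees with its neighbourhood in the filling map; this follows once the boundary region is known to be negligible in the appropriate quantitative sense.
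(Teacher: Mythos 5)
Your overall skeleton (Chebyshev plus a second moment computation, with the first moment handled by Theorem~\ref{thm_main} and rerooting invariance) is the same as the paper's, and that part is fine. The problem is the way you handle the second moment. Your key claim is that, conditionally on revealing a copy of $m_0$ around $e$, the filling map is uniform on $\B_{g_n}^{(\ell)}(\ff^n-\v)$ --- true --- and that this uniform boundary map, rerooted at a uniform oriented edge, still converges locally to $\MM_{\q}$. That second assertion is exactly equivalent to the statement $\P(e' \rightsquigarrow m_0 \mid e \rightsquigarrow m_0) \to p$ you are trying to prove, and it is \emph{not} available from Theorem~\ref{thm_main}: Theorem~\ref{thm_main} is a one-point (annealed) statement about closed maps, and conditioning on a local event at $e$ does not commute with local convergence at $e'$ without precisely the asymptotic decorrelation you are trying to establish. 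Neither this paper nor the quoted results state a local limit theorem for uniformly rerooted maps with a boundary, and such a theorem is not a ``robustness'' corollary of the proof of Theorem~\ref{thm_main}; proving it would require redoing the tightness-and-identification analysis for the two-point function, i.e.\ controlling ratios of the form $\beta_{g_n}^{(\ell,\ell')}(\ff^n-\v-\ov)/\beta_{g_n}(\ff^n)$, which is the genuinely new content here. So the heart of the argument is missing.

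For comparison, the paper closes this gap differently: it shows that the pair $\left( (M_n,e_n),(M_n,\oe_n) \right)$ is tight, that any subsequential limit $(M,\oM)$ is a \emph{Markovian pair} (by the face-disjointness decomposition of Lemma~\ref{lem:conditional_independence}, which only uses that the two copies overlap with vanishing probability), and then invokes Proposition~\ref{prop:markovian_pairs_are_independent} to get that $M$ and $\oM$ are independent conditionally on their Boltzmann weights; Theorem~\ref{thm_main} forces those weights to be the deterministic $\q$, so the only subsequential limit is a pair of independent copies of $\MM_{\q}$, giving $\E[X_n^2]\to p^2$ directly. If you want to keep your surgery-style argument, you would have to prove your boundary local limit theorem as a separate lemma; the shorter route is to reuse the Markovian-pair machinery already set up for Proposition~\ref{prop_main}. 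A minor additional point: when $\P(m_0\subset\MM_{\q})=0$ your conditioning step degenerates, whereas Markov's inequality already settles that case --- worth a sentence either way.
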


This was not proved in~\cite{BL22}, even under the light tail assumption. Note that Theorem~\ref{thm_main} directly implies the convergence of $\frac{1}{2|\ff^n|} \E \left[ \occ_{m_0} \left( M_{\ff^n,g_n} \right) \right]$, which is weaker. On the other hand, the analogue of Theorem~\ref{thm:quenched} was proved in~\cite{DS18} for a wide class of models of genus $0$ (that is, for critical Boltzmann maps without bipartiteness assumption).

\paragraph{Sketch of the proof.}
In~\cite[Theorem 2]{BL22}, it is proved (under the general assumptions of Theorem~\ref{thm_main}) that $\left( M_{\mathbf{f}^n,g_n} \right)$ is tight for the local topology and that any subsequential limit is of the form $\MM_{\QQ}$ for some random Boltzmann weight sequence $\QQ$. The next step of~\cite{BL22}, which is the only one requiring the assumption $\sum_j j^2\alpha_j<+\infty$, consists in proving that if one picks two roots $e_n, \oe_n$ independently in $M_{\mathbf{f}^n,g_n}$, the limiting (random) Boltzmann weights $\QQ$ and $\oQQ$ that we observe around $e_n$ and $\oe_n$ are almost surely the same. For face degrees with a light tail, this was shown in~\cite[Section 5]{BL22} by a surgery argument called the \emph{two-holes argument}. This argument relied on finding a common value attained by two random walks with a positive drift. However, if the face degrees have a heavy tail, the associated random walks may have large positive jumps, so it is possible that their ranges do not intersect.

Therefore, our proof that $\QQ=\oQQ$ will be completely different (but perhaps simpler) and consist in two steps: first, we observe that the neighbourhoods of $e_n$ and $\oe_n$ in $M_n$ must satisfy a certain joint spatial Markov property. We deduce from this property (Proposition~\ref{prop:markovian_pairs_are_independent}) that if these neighbourhoods converge jointly to two Boltzmann maps $M$ and $\oM$ with random weights $\QQ$ and $\oQQ$, then $M$ and $\oM$ are independent \emph{conditionally on $(\QQ,\oQQ)$}.
This will follow from appplying an intermediate result of~\cite{BL22} (Proposition~\ref{prop:Markovian_one_map} below) to the law of $M$ conditionally on $\oM$.
In the second step of the proof, we study the density of certain local patterns (see Figure~\ref{fig:m1_and_m2}) in $M$ and $\oM$: on the one hand, a simple combinatorial argument on $M_{\mathbf{f}^n,g_n}$ shows that this density must be the same around $e_n$ and $\oe_n$. On the other hand, the density of these patterns is sufficient to characterize $\QQ$ and $\oQQ$, which will show that $\QQ=\oQQ$.

\paragraph{Acknowledgement.} The author thanks the anonymous referee for useful remarks.

\section{Notation}\label{sec:prelim}

\paragraph{Maps.} A (finite or infinite) \emph{map} $M$ is a way to glue a finite or countable collection of finite oriented polygons, called the \emph{faces}, along their edges in a connected way, in such a way that any vertex is incident to finitely many faces. The maps that we consider will always be \emph{rooted}, i.e. equipped with a distinguished oriented edge called the \emph{root edge}. An infinite map $M$ is \emph{one-ended} if for any finite set $A$ of faces, the map $M \backslash A$ has only one infinite connected component.
If the number of faces is finite, then $M$ is always homeomorphic to an orientable topological surface, so we can define the genus of $M$ as the genus of this surface. In particular, we call a map \emph{planar} if it has genus $0$.
A \emph{bipartite} map is a rooted map where it is possible to color all vertices in black or white without any monochromatic edge. In a bipartite map, all faces have an even degree.
In what follows, we will only deal with bipartite maps even when it is not specified.

If $m$ is a (finite or infinite) map rooted at $e$ and if $r \geq 1$, we write $B_r(m,e)$ for the ball of radius $r$ around $e$ in $m$. More precisely, the map $B_r(m,e)$ consists of the faces of $m$ containing a vertex at graph distance at most $r$ from the starting point of $e$, along with all their vertices and edges. If $(m,e)$ and $(m',e')$ are two rooted maps, we define the \emph{local distance} between them by
\[ d_{\loc} \left( (m,e), (m',e') \right) = \left( 1+\min \{ r \geq 1 | B_r(m,e) \ne B_r(m',e') \}\right)^{-1}. \]
All the map convergences that we will consider will be for the topology induced by $d_{\loc}$. We also denote by $\B$ the space of infinite bipartite planar maps, equipped with $d_{\loc}$ and the associated Borel $\sigma$-algebra.

For every $\mathbf{f}=(f_j)_{j \geq 1}$ and $g\geq 0$, we will denote by $\B_g(\mathbf{f})$ the set of bipartite maps of genus $g$ with exactly $f_j$ faces of degree $2j$ for all $j\geq 1$. In particular, such a map exists if and only if $\sum_{j\geq 1} (j-1)f_j \geq 2g$, and in this case it has $|\mathbf{f}| =\sum_{j\geq 1} j f_j$ edges. We will denote by $\beta_g(\mathbf{f})$ the cardinality of $\B_g(\mathbf{f})$. Finally, for $j \geq 1$, we denote by $\mathbf{1}_j$ the sequence defined by $(\mathbf{1}_j)_i=1$ if $i=j$ and $0$ if not.

\paragraph{Maps with boundaries.} As often in the literature on local limits of maps, we will need to consider two different notions of bipartite maps with boundaries, that we call \emph{maps with a hole} and \emph{maps of multi-polygons}.

\begin{defn}
	A \emph{planar map with a hole} is a finite, bipartite planar map with a marked face (called its \emph{hole}) such that:
	\begin{itemize}
		\item
		the boundary of the hole is a simple cycle,
		\item
		the adjacency graph of the \emph{internal faces} (i.e. all the faces except the hole) is connected,
		\item
		the root edge may be any oriented edge of the map.
	\end{itemize}
	By convention, the trivial map consisting of two vertices joined by a single edge is a map with a hole (and no internal face). If $m$ is a map with a hole, we denote by $\partial m$  the boundary of the hole.
\end{defn}

\begin{defn}
	Let $\ell \geq 1$ and $p_1, p_2, \dots, p_{\ell} \geq 1$. A \emph{map of the $(2p_1, \dots, 2p_{\ell})$-gon} is a finite bipartite map with $\ell$ marked oriented edges $(e_i)_{1 \leq i \leq \ell}$, such that:
	\begin{itemize}
		\item
		$e_1$ is the root edge,
		\item
		the faces on the right of the $e_i$ are pairwise distinct,
		\item
		for all $1 \leq i \leq \ell$, the face on the right of $e_i$ has degree $2p_i$.
	\end{itemize}
	We will actually only need this definition for $\ell=1$ and $\ell=2$. The faces on the right of the marked edges are called \emph{external faces}, and the other ones are called \emph{internal faces}.
	We denote by $\B^{(p_1,p_2,\dots,p_{\ell})}_g(\mathbf{f})$ the set of bipartite maps of the $(2p_1,2p_2,\dots,2p_{\ell})$-gon of genus $g$ with internal face degrees given by $\textbf{f}$. We also denote by $\beta^{(p_1,p_2,\dots,p_{\ell})}_g(\mathbf{f})$ its cardinal.
\end{defn}
Note that in this second definition, we do not ask that the boundaries are simple or disjoint.

\paragraph{Map inclusion.}
Like in \cite{BL22}, our definition of map inclusion is the one tailored for the lazy peeling of~\cite{Bud15} (see also~\cite{C-StFlour}). Let $m$ be a map with a hole and let $M$ be a finite or infinite map. We write $m \subset M$ if $M$ can be obtained by gluing a (finite or infinite) map of the $|\partial m|$-gon in the hole of $m$. Note that the boundary of a map of the $2p$-gon is not necessarily simple, which may mean that two edges on the boundary of $m$ could correspond to two sides of the same edge of $M$. We also note that we always have $m \subset M$ if $m$ is the trivial one-edge map. 

\paragraph{Infinite Boltzmann bipartite maps.}
Let $\q=(q_j)_{j \geq 1}$ be a sequence of nonnegative real numbers (we will use bold characters for sequences and usual characters for their terms). We say that a random infinite, one-ended bipartite planar map $M$ is \emph {$\q$-Boltzmann} if there are constants $\left( C_p \right)_{p \geq 1}$ such that, for every finite bipartite map $m$ with a hole of perimeter $2p$, we have
\[ \P \left( m \subset M \right)=C_p \prod_{f \in m} q_{\deg(f)/2},\]
where the product is over all internal faces of $m$. We also denote by $\hQ$ the set of weight sequences $\q$ for which such a random map exists. It is proved in~\cite[Appendix C]{B18these} that if $\q \in \hQ$, then there is a unique (in distribution) $\q$-Boltzmann infinite map. We denote it by $\MM_{\q}$. In particular, the numbers $C_p$ only depend on $\q$, so we can write them $C_p(\q)$. An explicit formula for $C_p(\q)$ (which will not be needed here) is provided in~\cite[Appendix C]{B18these}. In particular, this formula gives $C_1(\q)=1$ (the interpretation is that the trivial map consisting of two vertices joined by the root edge is always included in $\MM_{\q}$). Moreover, if $\QQ$ is a random variable with values in $\hQ$, we denote by $\MM_{\QQ}$ a random infinite map which has the law of $\MM_{\q}$ conditionally on $\QQ=\q$. That is, for any finite planar map $m$ with a hole, we have
\[ \P \left( m \subset \MM_{\QQ} \right) = \E \left[ C_p(\QQ) \prod_{f \in m} Q_{\deg(f)/2}\right]. \]
Finally, we recall that by~\cite[Proposition 7]{BL22}, the weight sequence $\q$ can almost surely be recovered from $\MM_{\q}$. More precisely, there is a measurable function $\tq : \B \to \hQ$ such that for all $\q \in \hQ$, we have $\tq \left( \MM_{\q} \right)=\q$ almost surely.

\section{Setup}\label{sec:setup}

For the rest of the paper, we fix a sequence $\left( \mathbf{f}^n \right)_{n \geq 1}$ of face degree sequences and a sequence $(g_n)_{n \geq 1}$ satisfying the assumptions of Theorem~\ref{thm_main}, and we write $M_n$ for $M_{\mathbf{f}^n,g_n}$.

Theorem 2 of~\cite{BL22} states that the sequence $\left( M_n \right)$ is tight for the local topology and that any subsequential limit is of the form $\MM_{\mathbf{Q}}$ for some random sequence $\mathbf{Q}$ with $\mathbf{Q} \in \mathcal{Q}_h$ a.s.. In particular, let us focus on such a subsequence. Conditionally on $M_n$, let $e_n, \oe_n$ be two independent uniform oriented edges of $M_n$. By invariance of $M_n$ under uniform rerooting, both $\left( M_n, e_n \right)$ and $\left( M_n, \oe_n \right)$ have the same distribution as $M_n$. Hence, up to further extraction, we may assume the joint local convergence
\begin{equation}\label{eqn:double_local_convergence}
\left( \left( M_n, e_n \right), \left( M_n, \oe_n \right) \right) \xrightarrow[n \to +\infty]{}  \left( M, \oM \right)
\end{equation}
in distribution, where $M$ and $M'$ both have the same law as $\MM_{\QQ}$ for some random variable $\QQ \in \hQ$. In this setting, our main goal will be to prove the following result.

\begin{prop}\label{prop_main}
	Almost surely, we have $\tq(M)=\tq(M')$.
\end{prop}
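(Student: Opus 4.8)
The plan is to deduce a conditional independence for $(M,\oM)$ from the joint spatial Markov property of $M_n$, and then to feed a combinatorial symmetry of $M_n$ through the local limit. Throughout, write $\QQ=\tq(M)$ and $\oQQ=\tq(\oM)$, so that $M\sim\MM_\QQ$ and $\oM\sim\MM_\oQQ$; by invariance of $M_n$ under uniform re-rooting, $(\QQ,\oQQ)$ is exchangeable, and the goal is to show $\QQ=\oQQ$ almost surely.

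First I would establish the conditional independence announced in the introduction. Revealing a neighbourhood of $e_n$ and a neighbourhood of $\oe_n$ in $M_n$ leaves, conditionally on these neighbourhoods, a uniform map with prescribed (random) face degrees filling the remaining holes; this is the joint spatial Markov property. Passing to the limit and applying Proposition~\ref{prop:Markovian_one_map} to the law of $M$ conditionally on $\oM$ --- which is the content of Proposition~\ref{prop:markovian_pairs_are_independent} --- I obtain that, conditionally on $(\QQ,\oQQ)$, the maps $M$ and $\oM$ are independent, with respective laws $\MM_\QQ$ and $\MM_\oQQ$.

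For a finite map with a hole $m$ of half-perimeter $p$, set $\Phi_m(\q)=C_p(\q)\prod_{f\in m}q_{\deg(f)/2}=\P(m\subset\MM_\q)$. Since $\tq$ recovers the weight sequence, the family $(\Phi_m)_m$ separates the points of $\hQ$; in fact the two patterns of Figure~\ref{fig:m1_and_m2} already do. It therefore suffices to prove, for each relevant $m$, that $\Phi_m(\QQ)=\Phi_m(\oQQ)$ almost surely, and since these two variables have the same law it suffices to prove
\[ \E\!\left[\Phi_m(\QQ)^2\right]=\E\!\left[\Phi_m(\QQ)\,\Phi_m(\oQQ)\right]. \]
The right-hand side is accessible combinatorially. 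Writing $D_m^{(n)}=\occ_m(M_n)/(2|\ff^n|)$, which depends on $M_n$ only, the independence of $e_n$ and $\oe_n$ given $M_n$ gives $\P(m\subset(M_n,e_n)\mid M_n)=\P(m\subset(M_n,\oe_n)\mid M_n)=D_m^{(n)}$ --- this is the sense in which the pattern density is the same around $e_n$ and around $\oe_n$ --- and hence, using the conditional independence of $M$ and $\oM$,
\[ \E\!\left[\Phi_m(\QQ)\,\Phi_m(\oQQ)\right]=\lim_{n\to+\infty}\E\!\left[\left(D_m^{(n)}\right)^2\right]. \]
It then remains to identify this limit with $\E[\Phi_m(\QQ)^2]$, equivalently to show that $D_m^{(n)}\to\Phi_m(\QQ)$ jointly with $(M_n,e_n)\to M$: the global empirical density of $m$ converges to the quenched Boltzmann probability read off around the root. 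Granting this, the symmetric statement around $\oe_n$ identifies the limit of the single sequence $D_m^{(n)}$ with $\Phi_m(\oQQ)$ as well, so $\Phi_m(\QQ)=\Phi_m(\oQQ)$ almost surely, and separation of weights yields $\QQ=\oQQ$.

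The main obstacle is exactly this last quenched convergence, i.e. the concentration of the empirical density on its quenched value. Equivalently, viewing the weights seen around many independent uniform roots as an exchangeable sequence, one must show that its de Finetti directing measure is almost surely a Dirac mass, which is the ergodicity statement replacing the two-holes argument of~\cite{BL22}. I expect to prove it by choosing $m_1,m_2$ so that the ratio $\Phi_{m_1}/\Phi_{m_2}$ already determines $\q$ while being governed by a local count on $M_n$, and then controlling the second moments of $D_{m_1}^{(n)}$ and $D_{m_2}^{(n)}$ by a switching or injection argument on $\B_{g_n}(\ff^n)$ combined with the conditional independence of the first step. This is where the absence of any tail assumption on the face degrees is genuinely used, and where the bulk of the technical work will sit.
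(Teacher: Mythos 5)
Your first step (conditional independence of $M$ and $\oM$ given $(\QQ,\oQQ)$ via the joint Markov property and Proposition~\ref{prop:markovian_pairs_are_independent}) is exactly the paper's Lemma~\ref{lem:conditional_independence}, and your reduction of the problem to the identity $\E[\Phi_m(\QQ)^2]=\E[\Phi_m(\QQ)\Phi_m(\oQQ)]$ is sound. But the proof is not complete: the step you yourself flag as ``the main obstacle'' --- the quenched convergence $D_m^{(n)}\to\Phi_m(\QQ)$, i.e.\ concentration of the empirical pattern density on its quenched Boltzmann value --- is precisely the hard content of the statement, and it is left unproved. Worse, as a strategy it is close to circular: that concentration is essentially Theorem~\ref{thm:quenched}, which the paper only derives \emph{after} Proposition~\ref{prop_main} is known (it needs the limit weights to be deterministic). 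The vague plan you offer for it (a ``switching or injection argument'' on $\B_{g_n}(\ff^n)$) is a surgery argument of the same flavour as the two-holes argument that the paper explicitly explains breaks down without a tail assumption on the face degrees, so there is no reason to believe it closes the gap in the present generality.

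The paper avoids the concentration issue entirely with a short combinatorial identity that your proposal does not find. Take $m_j^1$ (one internal face of degree $2j$, hole of perimeter $2$) and $m_j^2$ (two internal faces of degree $2j$, hole of perimeter $2$) as in Figure~\ref{fig:m1_and_m2}. Closing boundaries shows that the \emph{one-root} probability $\P(m_j^2\subset(M_n,e_n))$ and the \emph{two-root} probability $\P(m_j^1\subset(M_n,e_n)\mbox{ and }m_j^1\subset(M_n,\oe_n))$ are both equal to $\beta_{g_n}(\ff^n-2\cdot\mathbf{1}_j)/\beta_{g_n}(\ff^n)$ up to $o(1)$. Passing to the limit, the first quantity is $\E[C_1(\QQ)Q_j^2]=\E[Q_j^2]$ because both degree-$2j$ faces sit at the \emph{same} root and are therefore governed by the same weight, while the second is $\E[Q_jQ_j']$ by the conditional independence from your first step. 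Hence $\E[(Q_j-Q_j')^2]=0$ directly, with no need for any quenched law of large numbers. So your write-up correctly reproduces half the argument and correctly identifies what remains to be shown, but the decisive idea --- trading the missing concentration statement for the equality of these two explicit pattern counts --- is absent, and the route you propose instead is not viable as stated.
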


\begin{proof}[Proof of Theorem~\ref{thm_main} using~Proposition~\ref{prop_main}]
	Proposition~\ref{prop_main} is exactly the same as~\cite[Proposition 30]{BL22}, but without the assumption $\sum_{j \geq 1} j^2 \alpha_j=+\infty$. In~\cite{BL22}, the end of the proof of the main Theorem using Proposition 30 (that is, the argument of Section 5.4) does not require the use of the tail assumption, so the exact same proof applies here.	

	It remains to check the claim that the application $\left( (\alpha_j)_{j \geq 1}, \theta \right) \to \q$ is bijective. For this, for $\q \in \hQ$ and $j \geq 1$, let
	\[ a_j(\q)=\frac{1}{j} \P \left( \mbox{the root face of $\MM_{\q}$ has degree $2j$} \right) \quad \mbox{and} \quad d(\q)=\E \left[ \frac{1}{\deg_{\MM_{\q}}(\rho)} \right],\]
	where $\rho$ is the root vertex of $\MM_{\q}$. As noted in~\cite[Corollary 31]{BL22} as a consequence of the Euler formula, under the assumptions of Theorem~\ref{thm_main}, we must have $a_j(\q)=\alpha_j$ for all $j \geq 1$ and $d(\q)=\frac{1}{2} \left( 1-2\theta-\sum_j \alpha_j \right)$.
	Therefore, the only parameters yielding $\MM_{\q}$ as a local limit are $\alpha_j=a_j(\q)$ and $\theta=\frac{1}{2} \left( 1-2d(\q)-\sum_{j \geq 1} a_j(\q) \right)$. This proves both injectivity and surjectivity.
\end{proof}

\section{Markovian pairs of infinite maps}

The goal of this section is to show that a certain joint Markov property for pairs of infinite random maps implies that the two elements of the pair are independent conditionally on their Boltzmann weights. This will constitute the first step of the proof of Proposition~\ref{prop_main}.

\begin{defn}
	\begin{enumerate}
		\item
		Let $M$ be a random infinite, one-ended bipartite planar map. We say that $M$ is \emph{Markovian} if for any bipartite map $m$ with a hole of perimeter $2p$  and internal face degrees given by $\v$, the probability $\P \left( m \subset M \right)$ only depends on $p$ and $\v$.
		\item
		Let $\left( M, \oM \right)$ be a pair of random infinite, one-ended bipartite planar maps. We say that $\left( M, \oM \right)$ is \emph{Markovian} if it satisfies the following property. For any two bipartite maps $m$ (resp. $\om$) with a hole of perimeter $2p$ (resp. $2\op$) and internal face degrees given by $\v$ (resp. $\ov$), the probability
		\[ \P \left( m \subset M \mbox{ and } \om \subset \oM \right)  \]
		only depends on $p$, $\op$, $\v$ and $\ov$.
	\end{enumerate}
\end{defn}

We first recall~\cite[Theorem 4]{BL22}, which provides a classification of Markovian maps.

\begin{prop}\label{prop:Markovian_one_map}
	For any Markovian map $M$, there is a random variable $\QQ$ with values in $\hQ$ such that $M$ has the law of $\MM_{\QQ}$.
\end{prop}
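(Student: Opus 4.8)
The plan is to run a peeling exploration of $M$ and to extract the random weights $\QQ$ from its asymptotics, in the spirit of a de Finetti argument. Fix once and for all a deterministic peeling algorithm $\A$, choosing for each explored map which boundary edge to peel next. Running $\A$ on $M$ produces an increasing sequence of explored maps $\mathcal E_t$, each a map with a hole of perimeter $2P_t$ and internal face degrees recorded by $\v_t=(v_j)_{j\ge1}$. Because $\A$ is deterministic and the peeling outcomes determine $\mathcal E_t$, the event $\{\mathcal E_t=m\}$ coincides with $\{m\subset M\}$ for the appropriate value of $t$, so by the Markovian hypothesis $\P(\mathcal E_t=m)=\phi(p,\v)$ depends only on $(p,\v)$. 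In particular the process $(\mathcal E_t)$ is a Markov chain whose only non-trivial transition, revealing a new internal face of degree $2k$, has probability $\phi(p+k-1,\v+\1_k)/\phi(p,\v)$ (the remaining mass being carried by the edge-identification events of the lazy peeling). Our task is to produce a random sequence $\QQ\in\hQ$ so that, conditionally on $\QQ$, these transitions are exactly the $\QQ$-Boltzmann ones.

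The core is a martingale and likelihood argument. For each fixed $\q\in\hQ$ for which $\MM_{\q}$ is absolutely continuous with respect to $M$ along the peeling filtration $\mathcal F_t=\sigma(\mathcal E_t)$, the ratio
\[ D_t^{\q}:=\frac{C_{P_t}(\q)\,\prod_{j\ge1} q_j^{\,v_j}}{\phi(P_t,\v_t)} \]
is precisely the Radon--Nikodym density of $\MM_{\q}$ with respect to $M$ on $\mathcal F_t$; it is therefore a nonnegative $(\mathcal F_t,\P)$-martingale and converges almost surely. Reading $\q\mapsto D_t^{\q}$ as a posterior likelihood over the parameter $\q$, the de Finetti mechanism suggests that this posterior concentrates, as $t\to+\infty$, on a single random weight sequence, which we take as the definition of $\QQ$. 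Concretely, $\QQ$ can be built from the almost sure limits of the empirical frequencies of revealed face degrees during the exploration, normalised by the explicit $\q$-free combinatorial factors of the peeling step; the martingale convergence above guarantees that these limits exist. A separate verification, using the a priori bounds on $\phi$ coming from one-endedness and from the finiteness of face and vertex degrees, shows that $\QQ$ indeed takes values in $\hQ$ almost surely.

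It remains to identify the conditional law, \ie to show that given $\QQ=\q$ the map $M$ is $\MM_{\q}$. Since a one-ended infinite bipartite map is determined in distribution by the law of its peeling transitions, it suffices to prove that, conditionally on the tail field $\mathcal I=\bigcap_t\sigma(\mathcal E_s:s\ge t)$, which carries $\QQ$, the probability of revealing a degree-$2k$ face at perimeter $2p$ equals the $\QQ$-Boltzmann value, independently of $\v$. This is exactly where Markovianity must be upgraded: the unconditional transitions depend on $(p,\v)$, and one has to argue that conditioning on the tail removes the $\v$-dependence and collapses them to the multiplicative form proportional to $Q_k$. I expect this reversal --- propagating the asymptotic tail Boltzmann behaviour back to a clean conditional law at every finite time --- to be the main obstacle. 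The natural route is to control the family $(D_t^{\q})_{\q}$ simultaneously (uniform integrability, and consistency of the limits across all perimeters $p$ and across the continuum of parameters $\q$), together with careful bookkeeping of the edge-identification events in the lazy peeling; a more abstract alternative is to show directly that the Boltzmann laws $\{\MM_{\q}:\q\in\hQ\}$ are exactly the extreme points of the convex set of Markovian laws, and to invoke a Choquet representation.
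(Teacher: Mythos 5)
First, a point of comparison: the paper does not prove this proposition at all. It is recalled verbatim as \cite[Theorem 4]{BL22} (building on the classification of weakly Markovian triangulations in \cite{BL19}), and everything in the present paper treats it as a black box. Your proposal is therefore an attempt to reprove a substantial external theorem from scratch, not a variant of an argument appearing here.

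As a proof it has a genuine gap, and you have located it yourself: the step you defer as ``the main obstacle'' \emph{is} the content of the statement. One must show that conditioning on the tail $\sigma$-field collapses the transition kernel $\phi(p+k-1,\v+\mathbf{1}_k)/\phi(p,\v)$, which a priori depends on the whole vector $\v$, to the multiplicative Boltzmann form $q_k\,C_{p+k-1}(\q)/C_p(\q)$ for a single random $\q$. Nothing in your set-up delivers this. The likelihood ratios $D_t^{\q}$ are nonnegative martingales only under an absolute-continuity hypothesis you do not verify, and their a.s.\ convergence says nothing about concentration of a ``posterior'' on one parameter; the de Finetti analogy is only heuristic, since the peeling increments are not exchangeable (each step's law depends on the current perimeter and face-degree vector), so neither Hewitt--Savage nor a reverse-martingale argument applies off the shelf. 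The construction of $\QQ$ from ``empirical frequencies of revealed face degrees normalised by $\q$-free factors'' is not actually defined --- those frequencies converge for $\MM_{\q}$, but for a general Markovian $M$ their convergence is part of what must be proved --- the verification that $\QQ\in\hQ$ is asserted rather than shown, and the Choquet alternative (that the laws $\MM_{\q}$ are exactly the extreme points of the convex set of Markovian laws) is a restatement of the theorem, not a route to it. The actual argument in \cite{BL22} works directly with the function $(p,\v)\mapsto\P(m\subset M)$, exploits the exact consistency relations it satisfies under a single peeling step, and extracts the mixing measure by a compactness analysis of those relations; your outline would need that analytic core filled in before it could stand as a proof.
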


Here is a convenient way to write down this result: for any bounded measurable function $f$ from the space $\B$ of infinite bipartite planar maps to $\R$, we define $\tf : \hQ \to \R$ by $\tf(\q)=\E \left[ f(\MM_{\q}) \right]$. We also recall that there is a measurable function $\tq$ such that for any $\q \in \hQ$, we have $\tq(\MM_{\q})=\q$ a.s.. Proposition~\ref{prop:Markovian_one_map} says that if $M$ is Markovian, then there is $\QQ$ such that for any $f$, we have
\[ \E \left[ f(M) \right] = \E \left[ \tf(\QQ) \right]= \E \left[ \tf(\tq(M)) \right].  \]

We will now prove a natural extension of Proposition~\ref{prop:Markovian_one_map} to pairs of maps.

\begin{prop}\label{prop:markovian_pairs_are_independent}
	Let $(M,M')$ be a Markovian pair of random, infinite, one-ended bipartite planar maps. Then there is a pair of random variables $\left( \QQ, \oQQ \right)$ with values in $\mathcal{Q}_h$ such that conditionally on $\left( \QQ, \oQQ \right)$, the pair $(M,M')$ has the same joint distribution as two independent infinite Boltzmann maps with Boltzmann weights $\QQ$ and $\oQQ$.
\end{prop}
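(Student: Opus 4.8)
The plan is to condition one map on the other and reduce to the single-map classification of Proposition~\ref{prop:Markovian_one_map}. First, note that $M$ and $M'$ are individually Markovian: taking $\om$ to be the trivial one-edge map (which is always included) in the definition of a Markovian pair shows that $\P(m \subset M)$ depends only on $p$ and $\v$, and symmetrically for $M'$. Hence by Proposition~\ref{prop:Markovian_one_map} and the recovery function $\tq$, if we set $\QQ := \tq(M)$ and $\oQQ := \tq(M')$, then conditionally on $\QQ$ the map $M$ has law $\MM_{\QQ}$, and conditionally on $\oQQ$ the map $M'$ has law $\MM_{\oQQ}$. It remains to upgrade these marginal statements to the joint conditional independence.

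The heart of the argument is the following claim: conditionally on $M'$, the map $M$ is almost surely Markovian. To see this, fix two maps $m_1, m_2$ with a hole having the same perimeter $2p$ and the same internal face degrees $\v$. For any map $\om$ with a hole, the Markovian pair property gives
\[ \P(m_1 \subset M,\ \om \subset M') = \P(m_2 \subset M,\ \om \subset M'), \]
since both sides depend only on $(p, \op, \v, \ov)$, and $m_1,m_2$ share $(p,\v)$. Equivalently, $\E\big[(\1_{m_1 \subset M} - \1_{m_2 \subset M})\,\1_{\om \subset M'}\big] = 0$ for every $\om$. Using the standard inclusion–exclusion formulas of the lazy peeling framework, each indicator $\1_{B_r(M') = b}$ of a ball event can be written as a finite signed combination of indicators $\1_{\om \subset M'}$; hence $\E\big[(\1_{m_1 \subset M} - \1_{m_2 \subset M}) \mid \mathcal{F}_r\big] = 0$, where $\mathcal{F}_r = \sigma(B_r(M'))$. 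Letting $r \to \infty$ and applying the martingale convergence theorem yields $\E[\1_{m_1 \subset M} \mid M'] = \E[\1_{m_2 \subset M} \mid M']$ almost surely. Since there are only countably many maps with a hole, we may fix a single full-measure event on which all these equalities hold simultaneously, which is exactly the statement that the conditional law of $M$ given $M'$ is Markovian.

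Applying Proposition~\ref{prop:Markovian_one_map} to this conditional law (and again using that $\tq$ recovers the weight sequence) gives that, conditionally on $(M', \QQ)$, the map $M$ has law $\MM_{\QQ}$; in particular $M$ is conditionally independent of $M'$ given $\QQ$, so for bounded measurable $f$,
\[ \E[f(M) \mid \QQ, M'] = \tf(\QQ), \qquad \tf(\q) := \E[f(\MM_{\q})]. \]
Running the same argument with the roles of $M$ and $M'$ exchanged yields $\E[g(M') \mid \oQQ, M] = \tg(\oQQ)$ for bounded measurable $g$. The two identities combine by the tower property: since $\oQQ$ is a function of $M'$ and $\QQ$ a function of $M$, conditioning first on $(\QQ, M')$ and then on $(\QQ, \oQQ)$ gives
\[ \E[f(M) g(M') \mid \QQ, \oQQ] = \E\big[g(M')\,\tf(\QQ) \,\big|\, \QQ, \oQQ\big] = \tf(\QQ)\,\E[g(M') \mid \QQ, \oQQ], \]
while a symmetric tower argument using the second identity (conditioning through $(M,\oQQ)$) gives $\E[g(M') \mid \QQ, \oQQ] = \tg(\oQQ)$. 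Hence $\E[f(M) g(M') \mid \QQ, \oQQ] = \tf(\QQ)\,\tg(\oQQ)$, which is precisely the assertion that, conditionally on $(\QQ, \oQQ)$, the maps $M$ and $M'$ are independent with respective laws $\MM_{\QQ}$ and $\MM_{\oQQ}$.

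The step I expect to be most delicate is the conditional Markovianity claim: one must justify that the inclusion events $\{\om \subset M'\}$ generate the Borel $\sigma$-algebra of $\B$ compatibly with the peeling inclusion–exclusion, and then pass carefully from the equalities of expectations against these indicators to an almost-sure equality of conditional laws along the filtration $(\mathcal{F}_r)$. Everything else is a formal manipulation of conditional expectations layered on top of the already-established single-map classification.
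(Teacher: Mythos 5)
Your proposal follows essentially the same route as the paper: both apply Proposition~\ref{prop:Markovian_one_map} to the conditional law of $M$ given $M'$, establish that this conditional law is Markovian via a martingale-convergence argument along a filtration generating $\sigma(M')$, and then combine the two resulting identities to get the factorization $\E[f(M)g(M')] = \E[\tf(\QQ)\tg(\oQQ)]$ with $\QQ=\tq(M)$, $\oQQ=\tq(M')$. The only real difference is at the step you yourself flag as delicate: where you invoke an inclusion--exclusion expressing ball events of $M'$ in terms of inclusion events, the paper instead runs a peeling exploration of $M'$ whose explored parts $\oM_n$ are themselves maps with a hole, so the conditioning events are directly of the form $\{\om \subset M'\}$ and generate $\sigma(M')$, sidestepping that issue entirely.
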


\begin{proof}
	We first provide a description of the law of two independent Boltzmann maps conditionally on their weights. Let $\left( \QQ, \oQQ \right)$ be a pair of random Boltzmann weight sequences and let $(M,M')$ be a pair of maps which, conditionally on $(\QQ, \oQQ)$, are independent infinite Boltzmann maps with weights $\QQ$ and $\oQQ$. Then we note that for any two bounded measurable functions $f,g : \B \to \R$, we have
	\begin{equation}\label{eqn:cond_independance_with_hats}
	\E \left[ f(M) g(\oM) \right] = \E \left[ \tf(\QQ) \tg(\oQQ) \right].
	\end{equation}
	Moreover, this identity characterizes the law of $(M,M')$.
	
	Therefore, let $\left( M, \oM \right)$ be a Markovian pair. The Proposition is equivalent to finding a pair $(\QQ,\oQQ)$ of variables on $\hQ$ such that~\eqref{eqn:cond_independance_with_hats} is satisfied for all $f$ and $g$. Roughly speaking, the proof will consist of applying Proposition~\ref{prop:Markovian_one_map} to the conditional distribution of $M$ given $\oM$.
	
	More precisely, let us fix $\om$ such that $\P \left( \om \subset \oM \right)>0$. For any $m$, the quantity
	\[ \P \left( m \subset M | \om \subset \oM \right) = \frac{\P \left( m \subset M \mbox{ and } \om \subset \oM \right)}{\P \left( \om \subset \oM \right)}  \]
	only depends on the perimeter and internal face degrees of $m$. This proves that the law of $M$ conditionally on $\om \subset \oM$ is Markovian. By the discussion following Proposition~\ref{prop:Markovian_one_map}, for any bounded measurable function $f : \mathcal{B} \to \R$, we have
	\begin{equation}\label{eqn:conditional_Markovian}
		\E \left[ f(M) | \om \subset \oM \right] = \E \left[ \tf \left( \tq(M) \right) | \om \subset \oM \right].
	\end{equation}
	Now consider a peeling algorithm on $\oM$ which almost surely explores the whole map (for example, always peel a boundary edge with minimal distance to the root). We write $\oM_n$ for the explored part of $\oM$ after $n$ peeling steps. By applying~\eqref{eqn:conditional_Markovian} to each possible value of $\oM_n$, we have
	\[ \E \left[ f(M) | \oM_n \right] = \E \left[ \tf \left( \tq(M) \right) | \oM_n \right]  \]
	for all $n \geq 1$. Since $\bigcup_{n \geq 1} \oM_n = \oM$, the $\sigma$-algebra generated by $\left( \oM_n \right)_{n \geq 1}$ is the $\sigma$-algebra generated by $\oM$, so by the martingale convergence theorem, letting $n \to +\infty$, we can write
	\begin{equation}\label{eqn:conditional_law_given_Mbar}
		\E \left[ f(M) | \oM \right] = \E \left[ \tf \left( \tq(M) \right) | \oM \right]
	\end{equation}
	almost surely, and a similar relation holds if we switch the roles of $M$ and $\oM$. Therefore, let $f,g$ be two bounded measurable functions from $\B$ to $\R$. Applying~\eqref{eqn:conditional_law_given_Mbar} first to $(M,\oM)$ and then to $(\oM,M)$, we obtain
	\[ \E \left[ f(M) g(\oM) \right] = \E \left[ \tf  \left( \tq(M) \right) g(\oM) \right] = \E \left[ \tf \left( \tq(M) \right) \tg \left( \tq(\oM) \right) \right]. \]
	This proves~\eqref{eqn:cond_independance_with_hats} with $\QQ=\tq(M)$ and $\oQQ=\tq(\oM)$, and therefore the proposition.
\end{proof}

\section{Equality of Boltzmann weights}

We now want to prove Proposition~\ref{prop_main}. The first step is an easy consequence of Proposition~\ref{prop:markovian_pairs_are_independent}.

\begin{lem}\label{lem:conditional_independence}
	In the setting of Section~\ref{sec:setup}, conditionally on the Boltzmann weights $\left( \tq(M), \tq(\oM) \right)$, the random maps $M$ and $\oM$ are independent.
\end{lem}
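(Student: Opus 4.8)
The plan is to derive the lemma from Proposition~\ref{prop:markovian_pairs_are_independent}. It suffices to prove that the limiting pair $(M,\oM)$ of~\eqref{eqn:double_local_convergence} is a Markovian pair: indeed, the proof of Proposition~\ref{prop:markovian_pairs_are_independent} identifies the conditioning variables explicitly as $\QQ=\tq(M)$ and $\oQQ=\tq(\oM)$, so that conditional independence given $(\QQ,\oQQ)$ is precisely the conditional independence of $M$ and $\oM$ given $(\tq(M),\tq(\oM))$ asserted here.

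To check the Markov property, fix two maps with holes $m$ and $\om$, of respective perimeters $2p$ and $2\op$ and internal face degrees $\v$ and $\ov$. For each fixed $n$, since $M_n$ is uniform over $\B_{g_n}(\ff^n)$ and $e_n,\oe_n$ are independent uniform oriented edges,
\[ \P\!\left( m \subset (M_n,e_n) \text{ and } \om \subset (M_n,\oe_n) \right) = \frac{N_n}{(2|\ff^n|)^2\,\beta_{g_n}(\ff^n)}, \]
where $N_n$ counts the triples $(M_0,e,\oe)$ with $M_0 \in \B_{g_n}(\ff^n)$, $m \subset (M_0,e)$ and $\om \subset (M_0,\oe)$. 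I split $N_n$ according to whether the embedded copies of $m$ and $\om$ share a face. For the \emph{disjoint} triples, deleting the internal faces of both copies leaves a map of the $(2p,2\op)$-gon with internal degrees $\ff^n-\v-\ov$, and gluing $m$ and $\om$ back into its two holes is the inverse operation; since $m$ and $\om$ are planar, capping the two boundary cycles does not change the genus, so this is a bijection onto $\B^{(p,\op)}_{g_n}(\ff^n-\v-\ov)$. Hence the disjoint part of $N_n$ equals $\beta^{(p,\op)}_{g_n}(\ff^n-\v-\ov)$, a quantity depending on $m,\om$ only through $p,\op,\v,\ov$.

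It remains to show that the non-disjoint triples are negligible and to pass to the limit. The copies of $m$ and $\om$ can share a face only if $\oe_n$ lies within graph distance $R$ of $e_n$, where $R$ depends only on $m$ and $\om$; conditionally on $(M_n,e_n)$ this has probability at most $|B_R(M_n,e_n)|/(2|\ff^n|)$. By~\eqref{eqn:double_local_convergence} the ball $B_R(M_n,e_n)$ converges in distribution to $B_R(M,\rho)$, which is almost surely finite since the standing assumptions guarantee that $M$ is a genuine locally finite map; hence $|B_R(M_n,e_n)|$ is tight, and as $|\ff^n| \to +\infty$ the non-disjoint probability tends to $0$. Since inclusion events are continuity events for the local topology (each is determined by a fixed-radius ball), passing to the limit in~\eqref{eqn:double_local_convergence} gives
\[ \P\!\left( m \subset M \text{ and } \om \subset \oM \right) = \lim_{n \to +\infty} \frac{\beta^{(p,\op)}_{g_n}(\ff^n-\v-\ov)}{(2|\ff^n|)^2\,\beta_{g_n}(\ff^n)}, \]
and the right-hand side depends only on $p,\op,\v,\ov$. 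Thus $(M,\oM)$ is Markovian, and Proposition~\ref{prop:markovian_pairs_are_independent} yields the claim.

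I expect the delicate point to be the vanishing of the non-disjoint contribution: it requires both the tightness of fixed-radius ball sizes around a uniform edge (which comes from the local finiteness of the limit $M$, itself tied to the hypotheses $\sum_j j\alpha_j=1$ and $\theta<\tfrac12\sum_j(j-1)\alpha_j$ that keep faces and vertex degrees finite in the limit) and a clean description of when two embedded neighbourhoods interfere. The genus bookkeeping in the gluing bijection, though routine, must also be stated carefully.
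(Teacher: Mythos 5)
Your proposal follows exactly the paper's route: reduce to showing that the limiting pair $(M,\oM)$ is Markovian, split the joint inclusion event according to whether the two copies are face-disjoint, identify the disjoint part with a count of maps of the $(2p,2\op)$-gon, and kill the non-disjoint part via tightness of the volume of a fixed-radius ball around $e_n$. One counting slip is worth flagging: your map $(M_0,e,\oe)\mapsto(\text{complement},e_1,e_2)$ is not a bijection onto $\B^{(p,\op)}_{g_n}(\ff^n-\v-\ov)$, because it forgets the original root edge of $M_0$ (which is a third, independent datum and may even lie inside one of the deleted copies); the disjoint part of $N_n$ is $2|\ff^n|\,\beta^{(p,\op)}_{g_n}(\ff^n-\v-\ov)$, so the correct identity is
\[ \P\left( m \subset (M_n,e_n) \mbox{ and } \om \subset (M_n,\oe_n) \mbox{ face-disjointly} \right) = \frac{\beta^{(p,\op)}_{g_n}(\ff^n-\v-\ov)}{2|\ff^n|\,\beta_{g_n}(\ff^n)}, \]
as in the paper. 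As written, your displayed limit identity is false (it would force $\P(m\subset M \mbox{ and } \om\subset\oM)=0$ for all $m,\om$, contradicting the case where both are the trivial one-edge map). The error does not affect the logical conclusion, since the missing factor $2|\ff^n|$ is independent of $m$ and $\om$ and Markovianity only requires that the limit depend on $(p,\op,\v,\ov)$ alone, but the normalization should be corrected.
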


\begin{proof}
	By Proposition~\ref{prop:markovian_pairs_are_independent}, it is sufficient to prove that the pair $(M,\oM)$ is Markovian. For this, let $m$ (resp. $\om$) be a finite planar, bipartite map with a hole of perimeter $2p$ (resp. $2\op$) and internal face degrees given by $\v$ (resp. $\ov$). Along some subsequence, we can write
	\[
	\P \left( m \subset (M_n, e_n) \mbox{ and } \om \subset (M_n, \oe_n) \right) \xrightarrow[n \to +\infty]{} \P \left( m \subset M \mbox{ and } \om \subset \oM \right).
	\]
	On the other hand, we can decompose the left-hand side according to whether the copy of $m$ around $e_n$ and the copy of $\om$ around $\oe_n$ are face-disjoint or not. If they are not, it means that $\oe_n$ lies in the ball of radius $\diam(m)+\diam(\om)$ around $e_n$. However, by local convergence of $(M_n,e_n)$, the volume of this ball is tight as $n \to +\infty$. Since $\oe_n$ is chosen uniformly in $M_n$, the probability that this occurs goes to $0$. On the other hand, we have
	\[ \P \left( m \subset (M_n, e_n) \mbox{ and } \om \subset (M_n, \oe_n) \mbox{ face-disjointly} \right) = \frac{\beta_{g_n}^{(p,\op)}(\mathbf{f}^n-\v-\ov)}{2 |\mathbf{f}^n| \beta_{g_n}(\mathbf{f}^{n})}.
	\]
	Therefore, we must have
	\[ \frac{\beta_{g_n}^{(p,\op)}(\mathbf{f}^n-\v-\ov)}{2 |\mathbf{f}^n| \beta_{g_n}(\mathbf{f}^{n})} \xrightarrow[n \to +\infty]{} \P \left( m \subset M \mbox{ and } \om \subset \oM \right)\]
	along some subsequence. Since the left-hand side only depends on $(p,\op,\v, \ov)$, so does the right-hand side.
\end{proof}

We can now finish the proof of Proposition~\ref{prop_main} (and therefore of Theorem~\ref{thm_main}). The second step consists in computing in two different ways the probability to observe a certain pattern around the root in $M$ and $\oM$.

\begin{proof}[Proof of Proposition~\ref{prop_main}]
	We write $\QQ=\tq(M), \oQQ=\tq(\oM)$. We fix some $j \geq 1$, and denote by $m_j^1$ and $m_j^2$ the two maps with a hole of Figure~\ref{fig:m1_and_m2}. Both of these maps have a hole of perimeter $2$ and $m_j^1$ has exactly one internal face with degree $2j$, whereas $m_j^2$ has exactly two internal faces, each with degree $2j$.
	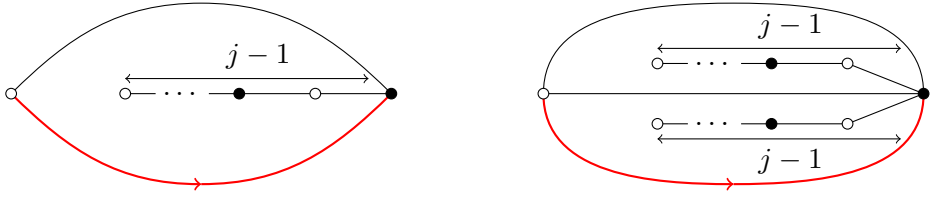
\begin{figure}
		\begin{center}
			\begin{tikzpicture}
				\draw[thick,red, ->](0,0)to[out=315,in=180](2.5,-1.2);
				\draw[thick,red](2.5,-1.2)to[out=0,in=225](5,0);
				\draw(0,0)to[out=45,in=180](2.5,1.2)to[out=0,in=135](5,0);
				\draw(5,0)--(4,0)--(3,0)--(2.6,0);
				\draw(1.5,0)--(1.9,0);
				\draw[<->](1.5,0.2)--(4.7,0.2);
				
				\draw(0,0)node[white]{};
				\draw(5,0)node[black]{};
				\draw(4,0)node[white]{};
				\draw(3,0)node[black]{};
				\draw(2.25,0)node[texte]{$\dots$};
				\draw(1.5,0)node[white]{};
				\draw(3.25,0.5)node[texte]{$j-1$};
				
				\begin{scope}[shift={(7,0)}]
					\draw[thick,red, ->](0,0)to[out=270,in=180](2.5,-1.2);
					\draw[thick,red](2.5,-1.2)to[out=0,in=270](5,0);
					\draw(0,0)to[out=90,in=180](2.5,1.2)to[out=0,in=90](5,0);
					\draw(0,0)--(5,0);
					\draw(5,0)--(4,0.4)--(3,0.4)--(2.6,0.4);
					\draw(1.5,0.4)--(1.9,0.4);
					\draw(5,0)--(4,-0.4)--(3,-0.4)--(2.6,-0.4);
					\draw(1.5,-0.4)--(1.9,-0.4);
					\draw[<->](1.5,0.6)--(4.7,0.6);
					\draw[<->](1.5,-0.6)--(4.7,-0.6);
					
					\draw(0,0)node[white]{};
					\draw(5,0)node[black]{};
					\draw(4,0.4)node[white]{};
					\draw(3,0.4)node[black]{};
					\draw(2.25,0.4)node[texte]{$\dots$};
					\draw(1.5,0.4)node[white]{};
					\draw(4,-0.4)node[white]{};
					\draw(3,-0.4)node[black]{};
					\draw(2.25,-0.4)node[texte]{$\dots$};
					\draw(1.5,-0.4)node[white]{};
					\draw(3.25,0.9)node[texte]{$j-1$};
					\draw(3.25,-0.9)node[texte]{$j-1$};
				\end{scope}
			\end{tikzpicture}
		\end{center}
		\caption{The maps $m_j^1$ (on the left) and $m_j^2$ (on the right). On each map, the root is in red and the hole is the face of degree $2$ on its right.}\label{fig:m1_and_m2}
	\end{figure}
	Now let $n$ be in a subsequence along which~\eqref{eqn:double_local_convergence} holds. Then we have
	\[ \P \left( m_j^2 \subset \left( M_n, e_n \right) \right) = \frac{\beta_g^{(1)}(\mathbf{f}^n-2 \cdot \mathbf{1}_j)}{\beta_g(\mathbf{f}^n)} = \frac{\beta_g(\mathbf{f}^n-2 \cdot \mathbf{1}_j)}{\beta_g(\mathbf{f}^n)} \]
	by closing the root edge. By local convergence, this implies
	\begin{equation}\label{eqn:convergence_m2_once}
		\frac{\beta_g(\mathbf{f}^n-2 \cdot \mathbf{1}_j)}{\beta_g(\mathbf{f}^n)} \xrightarrow[n \to +\infty]{} \P \left( m_j^2 \subset M \right) = \E \left[ C_1(\mathbf{Q}) Q_j^2 \right] = \E \left[ Q_j^2 \right]
	\end{equation}
	along our subsequence. By symmetry, the same is true if we replace $Q_j$ by $Q'_j$.
	
	On the other hand, if we have both $m^1_j \subset \left( M_n, e_n \right)$ and $m^1_j \subset \left( M_n, \oe_n \right)$ with face-disjoint copies of $m_j^1$, then the complement of the two copies of $m^1_j$ is a map with genus $g$, two boundaries of length $2$ and inner face degrees given by $\mathbf{f}^n-2 \cdot \mathbf{1}_j$. Therefore, we have
	\begin{align*}
		\P \left( m_j^1 \subset \left( M_n, e_n \right) \mbox{ and } m_j^1 \subset \left( M_n, \oe_n \right) \right) &= \frac{\beta_g^{(1,1)}(\mathbf{f}^n-2 \cdot \mathbf{1}_j)}{2 |\mathbf{f}^n| \beta_g(\mathbf{f}^n)}+O \left( \frac{1}{n}\right)\\
		&= \frac{\left( 2|\mathbf{f}^n|-2 \right)\beta_g(\mathbf{f}^n-2 \cdot \mathbf{1}_j)}{2 |\mathbf{f}^n| \beta_g(\mathbf{f}^n)}+O \left( \frac{1}{n}\right),
	\end{align*}
	where the $O \left( \frac{1}{n} \right)$ accounts for the probability that $e_n$ and $\oe_n$ are the same edge (possibly with different orientations). The second equality comes from closing the two boundaries, which leaves a marked edge in addition to the root. Hence, letting $n \to +\infty$ and using local convergence, we can write
	\[ \frac{\beta_g(\mathbf{f}^n-2 \cdot \mathbf{1}_j)}{\beta_g(\mathbf{f}^n)} \xrightarrow[n \to +\infty]{} \P \left( m_j^1 \subset M \mbox{ and } m_j^1 \subset \oM \right). \]
	By conditioning on $\left( \QQ, \oQQ \right)$ and using Lemma~\ref{lem:conditional_independence}, this becomes
	\begin{equation}\label{eqn:convergence_m1_twice}
		\frac{\beta_g(\mathbf{f}^n-2 \cdot \mathbf{1}_j)}{\beta_g(\mathbf{f}^n)} \xrightarrow[n \to +\infty]{} \E \left[ C_1(\QQ) Q_j C_1(\oQQ) Q'_j \right] = \E \left[ Q_j Q'_j \right].
	\end{equation}
	Combining~\eqref{eqn:convergence_m2_once} and~\eqref{eqn:convergence_m1_twice}, we finally find
	\[ \E \left[ \left( Q_j-Q'_j \right)^2 \right] = \E \left[ Q_j^2 \right] + \E \left[ (Q'_j)^2 \right] -2 \E \left[ Q_j Q'_j\right] =0, \]
	so $Q_j=Q'_j$ a.s.. Since this is true for all $j \geq 1$, this concludes the proof of Proposition~\ref{prop_main}.
\end{proof}

\begin{proof}[Proof of Theorem~\ref{thm:quenched}]
	The proof is a standard second moment argument combining Theorem~\ref{thm_main} and Proposition~\ref{prop:markovian_pairs_are_independent}. More precisely, we fix a finite planar map $m_0$ with a hole. Let $(\ff^n)_{n \geq 1}$ and $(g_n)_{n \geq 1}$ satisfy the assumptions of Theorem~\ref{thm_main}, and let $\q \in \hQ$ be the weight sequence provided by Theorem~\ref{thm_main}. In this proof, we \emph{do not} restrict the values of $n$ to a subsequence. Let $e_n, \oe_n$ be picked uniformly and independently among the $2|\ff^n|$ oriented edges of $M_n=M_{\ff^n,g_n}$. By Theorem~\ref{thm_main} and invariance of $M_n$ under uniform rerooting, we can write
	\begin{equation}\label{eqn:occurences_first_moment}
		\frac{\E \left[ \occ_{m_0} \left( M_n \right) \right]}{2|\ff^n|} = \P \left( m_0 \subset \left( M_n, e_n \right) \right) \xrightarrow[n \to +\infty]{} \P \left( m_0 \subset \MM_{\q} \right).
	\end{equation}
	On the other hand, we claim that the pair $\left( (M_n, e_n), (M_n, \oe_n) \right)$ converges jointly to a pair of independent copies of $\MM_{\q}$. Indeed, the pairs $\left( (M_n, e_n), (M_n, \oe_n) \right)$ are tight by tightness of both marginals and any subsequential limit $(M,M')$ is a Markovian pair by the same reasoning as the proof of Lemma~\ref{lem:conditional_independence}. Therefore, by Proposition~\ref{prop:markovian_pairs_are_independent}, the maps $M$ and $M'$ are independent conditionally on their Boltzmann weights. But by Theorem~\ref{thm_main}, these Boltzmann weights must be $\q$, so the only possible subsequential limit consists of two independent copies $\left( \MM_{\q}, \oMM_{\q} \right)$ of $\MM_{\q}$.
	
	Therefore, we can finally write
	\begin{eqnarray}
		\frac{\E \left[ \left( \occ_{m_0} \left( M_n \right) \right)^2 \right]}{\left( 2|\ff^n| \right)^2} &=& \P \left( m_0 \subset (M_n,e_n) \mbox{ and } m_0 \subset (M_n, \oe_n)\right) \nonumber \\
		&\xrightarrow[n \to +\infty]{}& \P \left( m_0 \subset \MM_{\q} \mbox{ and } m_0 \subset \oMM_{\q} \right) \nonumber \\
		&=& \P \left( m_0 \subset \MM_{\q} \right)^2.\label{eqn:occurences_second_moment}
	\end{eqnarray}
	Combining~\eqref{eqn:occurences_first_moment} and~\eqref{eqn:occurences_second_moment}, we find that the variance of $\frac{\occ_{m_0}(M_n)}{2|\ff^n|}$ goes to $0$ as $n \to +\infty$, which concludes the proof.
\end{proof}

\bibliographystyle{abbrv}
\bibliography{bibli}

\end{document}